\newtheorem{theorem}{Theorem}
\newtheorem{conjecture}{Conjecture}
\newtheorem{proposition}[theorem]{Proposition}
\theoremstyle{remark}
\newtheorem*{remark}{Remark}
\numberwithin{theorem}{section} \numberwithin{equation}{section}
\newcommand{\Q}{\mathbb{Q}}
\newcommand{\Z}{\mathbb{Z}}
\newcommand{\N}{\mathbb{N}}
\newcommand{\F}{\mathbb{F}_p}
\newcommand{\X}{\mathcal{X}}
\author{Matija Kazalicki}
\address{ Department of Mathematics, University of Zagreb, Bijeni\v{c}ka cesta 30, 10000 Zagreb, Croatia}
\email{matija.kazalicki@math.hr}
\thanks{MK was supported by the QuantiXLie Center of Excellence}
\author{Daniel Kohen}
\address{Departamento de Matem\'atica, Facultad de Ciencias Exactas y Naturales, Universidad de Buenos Aires and IMAS, CONICET, Argentina}
\email{dkohen@dm.uba.ar}
\thanks{DK was partially supported by a CONICET doctoral fellowship}
\title{On a special case of Watkins' conjecture}
\keywords{Modular degree , Rank of elliptic curves}
\subjclass[2010]{Primary: 11G05 , Secondary: 11F67}
\begin{document}

\begin{abstract}
Watkins' conjecture asserts that  for a rational elliptic curve $E$ the degree of the modular parametrization is divisible by
$2^r$, where $r$ is the rank of $E$. In this paper we prove that if the modular degree is odd then $E$ has rank $0$. 
Moreover, we prove that the conjecture holds for all rank two rational elliptic curves of prime conductor and positive discriminant.
\end{abstract}

\maketitle

\section{Introduction and preliminaries}
Given a rational elliptic curve $E$ of conductor $N$, by the modularity theorem, there exists a morphism of a minimal degree
 \[ \phi: X_0(N) \rightarrow E ,\]
 that is defined over $\Q$, where $X_{0}(N)$ is the classical modular curve. Its degree, denoted by $m_E$, is called the \emph{modular degree}. 
While analyzing experimental data, Watkins conjectured that
for an elliptic curve of rank $r$,  $m_E$ is divisible by $2^r$ 
\cite[Conjecture ~4.1]{Wat2}. In particular, if the modular 
degree is odd, the rank should be zero; the proof of this assertion is the main 
result of this work.

The study of elliptic curves with odd modular degree was first developed in 
\cite{CE} by Calegari and Emerton, where they showed that a rational elliptic 
curve
with odd modular degree has to satisfy a series of very restrictive hypotheses. 
For a detailed list of conditions see \cite[Theorem ~1.1]{CE}.
Later, building on this work,  Yazdani \cite{Yaz} studied abelian varieties 
having odd modular degree. As a by-product of his work, he proves that if a 
rational elliptic curve has odd modular degree then it has rank $0$, except 
perhaps if it has prime conductor and even analytic rank (see 
\cite[Theorem ~3.8]{Yaz}  for a more general statement). 
The main result of this paper is the following theorem:
\begin{theorem} \label{thm:1}
If $E/\Q$ is an elliptic curve of odd modular degree, then $E$ has rank $0$.
\end{theorem}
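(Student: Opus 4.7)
The plan is to split the problem using the reduction already available in the literature and then attack the residual case with parity tools.

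By \cite[Theorem~3.8]{Yaz} cited in the introduction, Yazdani has already established that every rational elliptic curve of odd modular degree has rank zero \emph{except} possibly when the conductor is prime and the analytic rank is even. So I may assume $E$ has prime conductor $p$ and even analytic rank $r_{\mathrm{an}}$. When $r_{\mathrm{an}}=0$, the theorem of Kolyvagin yields algebraic rank zero and there is nothing to prove. Hence Theorem~\ref{thm:1} reduces to the statement that no rational elliptic curve of prime conductor $p$, odd modular degree, and analytic rank at least $2$ exists.

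To rule out this residual configuration I would combine the structural restrictions from \cite[Theorem~1.1]{CE} with parity information. Calegari--Emerton force $\bar\rho_{E,2}$ to be irreducible, eliminate rational $2$-torsion and $2$-isogenies on $E$, and for prime conductor pin down the reduction type at $p$ and the value of $a_p\pmod 2$, thereby determining the global root number. The $2$-parity theorem of T.\ and V.\ Dokchitser, combined with the absence of rational $2$-torsion, translates even analytic rank into a parity constraint on $\dim_{\mathbb{F}_2}\mathrm{Sel}_2(E/\Q)$. Meanwhile, the odd modular degree controls the mod-$2$ behaviour of the modular parametrization $\phi:J_0(p)\to E$: by visibility in the sense of Cremona--Mazur, $2$-Selmer classes lift to $J_0(p)[2]$, and $v_2(m_E)=0$ prevents $\phi$ from killing such classes modulo $2$. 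Using Mazur's analysis of the Eisenstein ideal at prime level $p$ to bound $J_0(p)[2]$ modulo $\ker\phi$, one can hope to push through an upper bound on $\dim_{\mathbb{F}_2}\mathrm{Sel}_2(E/\Q)$ incompatible with analytic rank at least $2$.

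The central obstacle is precisely this upper bound step. Visibility accounts for the contribution of $J_0(p)[2]$ to $\mathrm{Sel}_2(E/\Q)$, but Shafarevich--Tate classes that are not visible in $J_0(p)[2]$ require a separate argument. I expect the delicate interplay between the $2$-part of the Hecke algebra acting on $S_2(\Gamma_0(p))$, the structure of the Eisenstein ideal, and the modular parametrization---all under the severe Calegari--Emerton constraints---to be the technical heart of the proof.
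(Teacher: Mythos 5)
Your reduction to the case of prime conductor $p$ and even analytic rank is exactly the paper's first step, and it is correct. But from that point on your proposal is a research programme, not a proof: you yourself identify the upper bound on $\dim_{\mathbb{F}_2}\mathrm{Sel}_2(E/\Q)$ as the ``central obstacle'' and only ``hope'' to push it through via visibility and the Eisenstein ideal. Nothing in the proposal actually rules out a curve of prime conductor, odd modular degree, and analytic rank $\geq 2$; the argument that would have to do the work (controlling invisible Sha classes, relating $v_2(m_E)=0$ to the image of $J_0(p)[2]$ under $\phi$) is not supplied, and it is not clear it can be made to work along these lines. So there is a genuine gap at precisely the point where the theorem's content lies.

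The paper closes this case by an entirely different and much more concrete parity argument in Mestre--Gross's module $\X$ of supersingular divisors. Since positive rank at prime conductor forces trivial torsion, Mestre's formula $\langle v_E,v_E\rangle=m_E\cdot\#E(\Q)_{tors}$ makes $\langle v_E,v_E\rangle=\sum_i w_i v_E(e_i)^2$ odd. Reducing mod $2$ and using $\sum_i v_E(e_i)=0$ (orthogonality of $v_E$ to the Eisenstein vector), one finds $\langle v_E,v_E\rangle$ is automatically even when $p\equiv 1,5\pmod{12}$ (all $w_i$ odd), and is congruent to $v_E(e_k)$ when $p\equiv 7,11\pmod{12}$, where $e_k$ is the unique class with $w_k=2$. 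In the latter case, positive rank gives $L(E,1)=0$ by Gross--Zagier--Kolyvagin, and the Gross--Waldspurger formula applied with $-D=-4$ forces $\langle v_E,b_4\rangle=\tfrac12 v_E(e_k)=0$, hence $v_E(e_k)=0$. Either way $\langle v_E,v_E\rangle$ is even, a contradiction. If you want to complete your write-up, you should replace your Selmer-theoretic sketch with an argument of this kind (or actually carry out the bound you postulate, which would be a substantial independent piece of work).
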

By the aforementioned results it is enough to restrict ourselves to the case where $E$ has prime conductor $p$ and even analytic rank. Moreover, it is clear that we can assume that the curve $E$ is the strong Weil curve, that is, the kernel of the  map $J_0(p) \rightarrow E$ is connected ($J_0(p)$ is the Jacobian of $X_0(p)$).

The elliptic curve $E$ gives rise to a normalized newform $f_E \in S_{2}(\Gamma_0(p))$ by the modularity theorem. The main idea of the article is to associate to $f_E$ (or $E$) an element $v_E$ of the Picard group $\mathcal{X}$ of a certain curve $X$ (which is a disjoint union of curves of genus zero)  as in \cite{Gross}.
More precisely, $\mathcal{X}$ can be described as the free $\Z$-module of divisors supported on the isomorphism classes of supersingular elliptic curves over $\overline{\F}$,  denoted by $e_1$, $e_2$, \ldots, $e_{n}$, where $n-1$ is the genus of $X_0(p)$. They are in bijection with the isomorphism classes of supersingular elliptic curves $E_i/\overline{\F}$. The action of Hecke correspondences on $X$ induces an action on $\X$. 
There is a correspondence between modular forms of level $p$ and weight $2$ and elements of $\mathcal{X}\otimes \mathbb{C}$ that preserves the action
of the Hecke operators (\cite[Proposition ~5.6]{Gross}).
Let $v_E=\sum v_E(e_i)e_i\in \X$  be an eigenvector for all Hecke operators $t_m$ corresponding to $f_E$, i.e. $t_m v_E= a(m) v_E$, where $f_E(\tau)=\sum_{m=1}^\infty a(m)q^m$. We normalize $v_E$ (up to sign) such that the greatest common divisor of all its entries is $1$. 
 We define a $\Z$-bilinear pairing 
\[
\langle -,- \rangle:\X\times\X \rightarrow \Z,
\]
by requiring $\langle e_i,e_j \rangle=w_i \delta_{i,j}$ for all $i,j \in \{1,\ldots,n \}$, where
$w_i=\frac{1}{2}\#\textrm{Aut}(E_i)$.

We have the following key result of Mestre that relates the norm of $v_E$ to the modular degree $m_E$.

\begin{proposition} \cite[Theorem ~3]{Mestre} \label{prop:modular} 
$$\langle v_E,v_E \rangle= m_E  t,$$
where $t$ is the size of $E(\Q)_{tors}$.
\end{proposition}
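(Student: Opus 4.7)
The plan is to reinterpret the pairing $\langle\cdot,\cdot\rangle$ on $\mathcal{X}$ as the Grothendieck monodromy pairing on the character group of the N\'eron torus of $J_0(p)$ at $p$, and then compute $\langle v_E,v_E\rangle$ by functoriality along the modular parametrization $\pi:J_0(p)\to E$.

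By the Deuring correspondence combined with Ribet's theorem on the toric part of $J_0(p)$ in the special fiber at $p$, one identifies $\mathcal{X}$ canonically with the character lattice $X$ of this torus, sending each $e_i$ to a natural basis element and carrying $\langle e_i,e_j\rangle=w_i\delta_{ij}$ to the monodromy pairing; the identification is Hecke-equivariant, so $v_E$ corresponds to the primitive generator of the saturated rank-one $f_E$-isotypic sublattice $X^{f_E}\subset X$. Since $E$ has conductor exactly $p$, it has multiplicative reduction at $p$, giving a character lattice $X_E\cong\mathbb{Z}$ whose monodromy pairing on the generator is $\langle 1,1\rangle_{X_E}=n_p:=-\mathrm{ord}_p(j_E)$, the order of the component group $\Phi_E$ of $E$ at $p$.

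The strong Weil hypothesis, that $\ker(\pi)$ is connected, yields an exact sequence $0\to K\to J_0(p)\to E\to 0$ of abelian varieties; passing to character groups of toric parts in the special fiber at $p$ produces an inclusion $\pi^*:X_E\hookrightarrow X$ with torsion-free cokernel, hence saturated image. This image is contained in $X^{f_E}$ and, being a saturated rank-one sublattice, must equal $X^{f_E}$, so $\pi^*(1)=\pm v_E$. Using the dual isogeny $\pi^\vee:E\to J_0(p)$, the identity $\pi\circ\pi^\vee=[m_E]$ on $E$ gives $(\pi^\vee)^*\circ\pi^*=m_E\cdot\mathrm{id}$ on $X_E$; combined with the functoriality of the monodromy pairing, which expresses $(\pi^\vee)^*$ as the adjoint of $\pi^*$, this produces
$$\langle v_E,v_E\rangle=\langle\pi^*(1),\pi^*(1)\rangle_X=m_E\,\langle 1,1\rangle_{X_E}=m_E\,n_p.$$

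The final step is to identify $n_p$ with $t=\#E(\mathbb{Q})_{\rm tors}$. For a strong Weil elliptic quotient of $J_0(p)$ with $p$ prime, Mazur's analysis of the Eisenstein ideal and the rational torsion of $J_0(p)$, together with the natural injection $E(\mathbb{Q})_{\rm tors}\hookrightarrow \Phi_E(\mathbb{F}_p)$, force this injection to be an isomorphism, so $t=n_p$, giving $\langle v_E,v_E\rangle=m_E\,t$ as claimed. The main obstacle is precisely this last identification: the saturation and adjunction arguments are essentially formal once one has Ribet's description of $\mathcal{X}$ as the monodromy lattice, but equating the order of the component group with the rational torsion uses the prime-conductor hypothesis and the Eisenstein-ideal machinery in an essential way.
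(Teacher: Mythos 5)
The paper offers no argument for this proposition at all --- it is quoted directly from Mestre's Th\'eor\`eme 3 --- so any honest proof attempt is necessarily ``a different route.'' The skeleton of yours is the standard Ribet--Takahashi argument and its first three steps are essentially sound: the supersingular module (more precisely its degree-zero part $\mathcal{X}^0$, not all of $\mathcal{X}$ --- note $\mathcal{X}$ has rank $n$ while the character lattice of the toric part of $J_0(p)_{\mathbb{F}_p}$ has rank $n-1$; this does no harm since $v_E\in\mathcal{X}^0$ by Proposition 1.5 and primitivity passes between the saturated sublattice and $\mathcal{X}$) carries the pairing $\langle e_i,e_j\rangle=w_i\delta_{ij}$ to the monodromy pairing; optimality makes $\pi^*(X_E)$ saturated, hence equal to $\mathbb{Z}v_E$ by multiplicity one; and $\pi\circ\pi^\vee=[m_E]$ plus adjunction gives $\langle v_E,v_E\rangle=m_E\,n_p$ with $n_p=-\mathrm{ord}_p(j_E)=\mathrm{ord}_p(\Delta_E)$.

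The genuine gap is the final step $n_p=t$. First, the mechanism you propose is wrong as stated: the monodromy pairing computes the \emph{geometric} component group $\Phi_E(\overline{\mathbb{F}}_p)\cong\mathbb{Z}/n_p$, whereas $\Phi_E(\mathbb{F}_p)$ has order at most $2$ whenever the reduction at $p$ is non-split multiplicative, so an isomorphism $E(\Q)_{tors}\cong\Phi_E(\mathbb{F}_p)$ with $|\Phi_E(\mathbb{F}_p)|=n_p$ cannot hold in general. Second, even after replacing $\Phi_E(\mathbb{F}_p)$ by the geometric component group, neither the injectivity of $E(\Q)_{tors}\to\Phi_E(\overline{\mathbb{F}}_p)$ (torsion could a priori reduce into the identity component, which has order $p\mp1$) nor the surjectivity is formal; the equality $t=n_p$ for optimal curves of prime conductor is a substantive theorem resting on Mazur's Eisenstein ideal results (the cuspidal group $\langle(0)-(\infty)\rangle$ exhausts the rational torsion of $J_0(p)$ and its image both generates $E(\Q)_{tors}$ and maps onto the component group), and is in fact the content Mestre imports to pass from $m_E\,n_p$ to $m_E\,t$. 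As written, your last paragraph asserts rather than proves precisely the nontrivial half of the proposition, and asserts it via a comparison with the wrong group.
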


The final ingredient we need is the Gross-Waldspurger formula on special values of $L$-series \cite{Gross}. An alternative approach is to use
 Gross-Kudla  formula for the special values of triple products of $L$-functions \cite{GK}.

In \cite{KK}, while studying supersingular zeros of divisor polynomials of elliptic curves, the authors posed the following conjecture.

\begin{conjecture}\label{conj:1} If $E$ is an elliptic curve of  prime conductor $p$, root number $1$, and $rank(E)>0$, then $v_E(e_i)$ is an even number for all $e_i$  with $j(E_i)\in \F.$ 
\end{conjecture}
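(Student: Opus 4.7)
The plan is to combine the Frobenius symmetry on $\X$ with the vanishing of central $L$-values forced by $\mathrm{rank}(E)>0$ together with root number $+1$. Frobenius induces an involution $\sigma:e_i\mapsto[E_i^{(p)}]$ on the basis of $\X$ whose fixed set is precisely $\{e_i:j(E_i)\in\F\}$, while the remaining basis elements form Frobenius pairs. The root number pins down the Atkin--Lehner eigenvalue of $f_E$ at $p$, which in turn constrains how $\sigma$ acts on $v_E$ through the Jacquet--Langlands isomorphism of \cite[\S 5]{Gross}; this constraint should already control the Frobenius-paired $e_i$ modulo $2$, reducing the conjecture to a statement about the $\F$-rational basis elements alone.

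For the $\F$-rational locus I would invoke the Gross--Waldspurger formula. For each imaginary quadratic field $K=\Q(\sqrt{-D})$ in which $p$ is inert and which satisfies the Heegner hypothesis for $f_E$, the formula gives
\[
L(E,1)\,L(E,\chi_K,1) \;=\; c_{D}\left(\sum_{[\mathfrak a]\in Cl(K)} v_E(e_{[\mathfrak a]})\right)^{2},
\]
where $e_{[\mathfrak a]}$ records the supersingular reduction at $p$ of the Heegner divisor attached to $\mathfrak a$. Since $\mathrm{rank}(E)>0$ and root number $+1$ force $L(E,1)=0$, every such CM-cycle sum vanishes. I would then specialise to fields $K$ whose CM cycles are supported on the $\F$-rational supersingular $j$-invariants --- for instance $K=\Q(\sqrt{-p})$, or small class-number-one fields detecting a single $e_i$ --- to produce a family of linear relations among the $v_E(e_i)$ with $j(E_i)\in\F$. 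The alternative Gross--Kudla triple-product identity mentioned just after Proposition~\ref{prop:modular} would provide a parallel set of relations and could cover cases in which the Gross--Waldspurger hypotheses fail.

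The main obstacle will be the final step: showing that, as $K$ varies over admissible fields, the resulting CM-cycle linear forms on the $\F$-rational part of $\X$ separate every coordinate modulo $2$. This is essentially a non-degeneracy statement for the Brandt module over $\F_2$ and should reduce to an explicit analysis of optimal embeddings of orders of $K$ into a fixed maximal order of $B_{p,\infty}$, together with careful bookkeeping of the weights $w_i$ at basis elements with extra automorphisms (so that the pairing $\langle-,-\rangle$ interacts cleanly with reduction mod $2$). For small $p$ the admissible family of $K$'s may be too thin to detect every coordinate, in which case one would have to bolster the argument with products of Gross--Kudla relations or a direct mod-$2$ analysis of Brandt matrices --- this is where I expect the bulk of the technical work to lie.
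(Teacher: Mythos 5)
First, a point of orientation: the statement you were asked about is Conjecture~\ref{conj:1}, which the paper does \emph{not} prove --- it is quoted from \cite{KK} as an open problem, with only partial cases known (root number $-1$, or positive discriminant with no rational $2$-torsion, by \cite[Theorems~1.1, 1.2, 1.4]{KK}). So there is no proof in the paper to compare against, and any complete argument you gave would be new mathematics. Your proposal is not such an argument: it is a strategy sketch whose decisive step you yourself flag as unresolved. Concretely, the entire content of the conjecture sits in the step you defer to the end --- showing that the linear forms $v\mapsto\langle v,b_D\rangle$ (for admissible $-D$) separate the coordinates $v_E(e_i)$ with $j(E_i)\in\F$ modulo $2$. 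Everything before that is soft: the vanishing $L(E,1)=0$ comes from Gross--Zagier--Kolyvagin (Proposition~\ref{prop:GZ}), the Frobenius symmetry $v_E(e_i)=v_E(e_{\bar i})$ is \cite[Proposition~2.4]{Gross}, and the reduction of $m_D\bmod 2$ to the $\F$-rational locus follows from that symmetry. But a single coordinate $v_E(e_k)$ is pinned down only in the very special case $D=4$ (as the paper does in proving Theorem~\ref{thm:1}, using that $b_4$ is supported on one vertex); for general $D$ the vector $b_D$ is spread over many supersingular classes, and no non-degeneracy statement for this system over $\mathbb{F}_2$ is known. Without it you have produced some relations, not the conjectured parity of each coordinate.

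Two further concrete problems. First, your proposed specialization to $K=\Q(\sqrt{-p})$ is inadmissible: Proposition~\ref{prop:Gross} requires $\left(\frac{-D}{p}\right)=-1$, i.e.\ $p$ inert in $K$, whereas $p$ ramifies in $\Q(\sqrt{-p})$, so that field gives you nothing. Second, the first paragraph of your plan expends effort ``controlling the Frobenius-paired $e_i$ modulo $2$'' via Atkin--Lehner eigenvalues; this is beside the point, since the conjecture makes no claim about indices with $i\neq\bar i$, and the identity $v_E(e_i)=v_E(e_{\bar i})$ holds unconditionally regardless of the root number. In short: the approach is a reasonable way to \emph{attack} the conjecture and is in the spirit of how the paper exploits $b_4$ and the Gross--Kudla relation, but it does not constitute a proof, and the missing non-degeneracy step is exactly where the difficulty of the open problem lives.
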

The conclusion of the conjecture holds for any elliptic curve curve $E/\Q$ of 
prime conductor and root number $-1$, as well as for any curve of prime 
conductor that has positive discriminant and no rational points of order $2$ 
(see \cite[Theorems ~1.1,~1.2, ~1.4]{KK}).

In the last paragraph of this paper we will show the connection between this 
conjecture and Watkins' conjecture:

\begin{theorem}\label{thm:2}
Let $E/\Q$ be an elliptic curve of prime conductor such that $rank(E)>0$. If $v_E(e_i)$ is even number for all $e_i$  with $j(E_i)\in \F$, then $4|m_E$.
\end{theorem}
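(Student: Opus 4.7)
The plan is to extract $4\mid m_E$ from Mestre's identity
\[
m_E t=\langle v_E,v_E\rangle=\sum_i w_i v_E(e_i)^2
\]
(Proposition~\ref{prop:modular}) by computing the right-hand side modulo $4$. By Setzer's classification, the only prime-conductor elliptic curves with a rational $2$-torsion point are the Neumann--Setzer curves, which all have rank $0$; so $\mathrm{rank}(E)>0$ forces $t=\#E(\Q)_{\mathrm{tors}}$ to be odd, and it suffices to prove $4\mid\langle v_E,v_E\rangle$. The second ingredient is the Atkin--Lehner involution $w_p$: it permutes the basis $\{e_i\}$ of $\X$ by the Frobenius permutation $\sigma$ of supersingular $j$-invariants (the unique order-$p$ subgroup of a supersingular elliptic curve is $\ker\mathrm{Frob}$, with quotient the Frobenius twist) and satisfies $w_p v_E=\epsilon v_E$ for the root number $\epsilon\in\{\pm1\}$, so in particular $v_E(e_{\sigma(i)})^2=v_E(e_i)^2$ for every $i$.

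Partition the basis into $S_0=\{i:j(E_i)\in\F\}$ (Frobenius-fixed; containing the only indices with $w_i>1$, namely $j\in\{0,1728\}$) and $S_1$ (a disjoint union of two-element Frobenius orbits, on which $w_i=1$). Mestre's identity then reads
\[
\langle v_E,v_E\rangle=\sum_{i\in S_0}w_i v_E(e_i)^2+2\sum_{\{i,\sigma(i)\}\subset S_1}v_E(e_i)^2,
\]
and the hypothesis makes every term of the first sum divisible by $4$. The problem therefore collapses to showing that
\[
N:=\#\bigl\{\text{orbits }\{i,\sigma(i)\}\subset S_1:v_E(e_i)\text{ is odd}\bigr\}
\]
is even. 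Feeding the cuspidality relation $\sum_i w_i v_E(e_i)=0$ through the same partition (using $v_E(e_{\sigma(i)})=\epsilon v_E(e_i)$) and reducing modulo $4$ with the hypothesis $v_E(e_i)=2u_i$ for $i\in S_0$ yields the congruence $N\equiv\sum_{i\in S_0,\,j(E_i)\neq 1728}u_i\pmod 2$. Now invoke Gross--Waldspurger: since $\mathrm{rank}(E)>0$ combined with $\epsilon=+1$ forces $L(E,1)=0$, one has $\langle v_E,g_D\rangle=0$ for the CM divisor $g_D\in\X$ attached to each imaginary quadratic discriminant $-D$ in which $p$ is inert. Class-number-one cases ($D=3,4,7,\ldots$) make $v_E$ vanish at the corresponding SS points (in particular at $j=0$ and $j=1728$ when these are supersingular), while higher class-number cases give further linear mod-$2$ relations among the remaining $u_i$ for $i\in S_0$; together these should collapse the right-hand side of the congruence to $0$ modulo $2$, forcing $N$ even.

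The main obstacle is exactly this last step. After Mestre, Atkin--Lehner, and cuspidality have compressed the entire problem into a single parity relation, closing the argument requires extracting enough independent mod-$2$ congruences among the values of $v_E$ on the $\F$-rational supersingular locus to force that parity to vanish. The natural toolkit is the collection of Gross--Waldspurger orthogonalities (or, equivalently, mod-$2$ reductions of the Hecke eigenvalue identities $T_\ell v_E=a_\ell v_E$ via the Brandt matrices), but choosing the right discriminants $-D$ (or operators $T_\ell$) in a manner uniform in $p$ to produce a second relation independent of cuspidality appears to be the delicate part of the argument.
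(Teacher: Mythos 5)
Your reduction is correct and matches the paper's: torsion is trivial (so $m_E=\langle v_E,v_E\rangle$), the indices with $w_i>1$ contribute nothing, the Frobenius-fixed (i.e.\ $\F$-rational) terms are squares of even numbers and hence vanish mod $4$, and the two-element Frobenius orbits contribute $2N \pmod 4$ where $N$ is the number of orbits on which $v_E$ is odd. But the proof is not complete: everything hinges on showing $N$ is even, and that is exactly the step you leave open. Your route via the linear cuspidality relation $\sum_i v_E(e_i)=0$ only converts the problem into the congruence $N\equiv\sum_{i\in S_0}u_i\pmod 2$ (where $v_E(e_i)=2u_i$ on the $\F$-rational locus), and the proposed way to kill the right-hand side --- ``enough independent Gross--Waldspurger orthogonalities or Brandt-matrix congruences, for suitably chosen discriminants'' --- is a hope, not an argument; you yourself flag it as the delicate part. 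A linear relation is structurally the wrong tool here, because it cannot make the $\F$-rational contribution disappear: it only trades one unknown parity for another.

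The paper closes the gap with a \emph{cubic} identity rather than a linear one: the Gross--Kudla formula gives $\sum_i w_i^2 v_E(e_i)^3=0$ whenever $L(E,1)=0$ (which holds here since $\mathrm{rank}(E)>0$, by Gross--Zagier--Kolyvagin, with no need to invoke $\epsilon=+1$). Reducing this modulo $4$, the $\F$-rational terms are cubes of even numbers, hence $\equiv 0\pmod 8$ --- the cubing automatically annihilates the $S_0$ contribution that your linear relation could not --- while each conjugate pair contributes $2v_E(e_i)^3\equiv 2v_E(e_i)\pmod 4$ (using $v_E(e_i)=v_E(e_{\bar i})$ from Gross's Proposition~2.4). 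This yields $2N\equiv 0\pmod 4$ directly, i.e.\ $N$ is even. So the missing idea is precisely the use of the triple-product (cube) identity in place of, not in addition to, the cuspidality relation; without it, or some equally strong substitute, your argument does not terminate.
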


In particular, as remarked before, this verifies Watkins' conjecture if $E$ has prime conductor, $disc(E)>0$ and $rank(E)=2$.


\bigskip

\textbf{Acknowledgments:}  We would like to thank A. Dujella, I. Gusi\'c, M. Mereb and F. Najman for their useful comments and suggestions.

\section{Proof of the main Theorem}
We will give a series of propositions that will allow us to prove Theorem \ref{thm:1}.

\begin{proposition}\label{prop:GZ}
If $E/\Q$ has non-zero rank, then $L(E,1)=0$.
\end{proposition}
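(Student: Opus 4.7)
The proposition is essentially the contrapositive of a celebrated theorem of Kolyvagin, and my plan is to deduce it directly from the Gross--Zagier--Kolyvagin machinery rather than from anything intrinsic to the odd-modular-degree setup. Namely, Kolyvagin's Euler system theorem combined with the Gross--Zagier formula gives the implication
\[
L(E,1)\neq 0 \;\Longrightarrow\; \mathrm{rank}(E(\Q))=0,
\]
so Proposition~2.2 is the contrapositive.

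To flesh this out, I would proceed in three steps. First, assuming toward a contradiction that $\mathrm{rank}(E(\Q))>0$ and yet $L(E,1)\neq 0$, I would invoke a non-vanishing result in families of quadratic twists (Waldspurger, or Bump--Friedberg--Hoffstein, or Murty--Murty) to produce an imaginary quadratic field $K$ satisfying the Heegner hypothesis for the prime conductor $p$ such that $L(E^K,1)=0$ but $L'(E^K,1)\neq 0$. Equivalently, $L(E/K,s)$ vanishes to order exactly $1$ at $s=1$. Second, I would apply the Gross--Zagier formula, which expresses the N\'eron--Tate height of the Heegner point $y_K\in E(K)$ as a nonzero multiple of $L'(E/K,1)$; this forces $y_K$ to have infinite order. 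Third, Kolyvagin's theorem on the Euler system of Heegner points then pins down $\mathrm{rank}(E(K))=1$ exactly, with the nontorsion part lying in the minus-eigenspace for the action of $\mathrm{Gal}(K/\Q)$. Combined with $L(E,1)\neq 0$, which supplies the plus-eigenspace contribution as rank $0$, this yields $\mathrm{rank}(E(\Q))=0$, contradicting our assumption.

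There is no real obstacle: every ingredient is a standard and heavily used result in the literature on BSD in analytic rank $\le 1$. The only mildly technical point is verifying the existence of the auxiliary field $K$ with the precise analytic properties and the Heegner hypothesis at $p$, but for $E$ of prime conductor this is classical. Consequently the proof can be presented as a short citation of Kolyvagin and Gross--Zagier rather than an independent argument.
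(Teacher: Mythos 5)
Your proposal is correct and takes essentially the same approach as the paper: the paper's proof is a one-line citation of the Gross--Zagier--Kolyvagin theorem (via Darmon's book, Theorem 3.22), of which the stated proposition is exactly the contrapositive. Your expanded sketch (auxiliary imaginary quadratic field via non-vanishing of twists, Gross--Zagier to get a non-torsion Heegner point, Kolyvagin plus the eigenspace decomposition) is precisely the standard argument behind that citation.
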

\begin{proof}
This is a classical application of Gross-Zagier and Kolyvagin theorems. For a 
reference see \cite[Theorem ~3.22]{Dar}.
\end{proof}

\begin{proposition} \label{prop:torsion}
If $E/\Q$ has prime conductor and non-zero rank, then $E(\Q)_{tors}$ is trivial.
\end{proposition}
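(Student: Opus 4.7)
I would argue by contradiction. Suppose $E/\Q$ has prime conductor $p$, positive rank, and a rational point of prime order $\ell$. The plan is to derive an Eisenstein congruence for the newform $f_E$ and then invoke Mazur's theorem on the Eisenstein ideal to conclude that $E(\Q)$ is finite, a contradiction.

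The rational $\ell$-torsion point makes the mod-$\ell$ Galois representation $\rho_{E,\ell}$ reducible: the point generates a trivial one-dimensional sub-representation of $E[\ell]$, and by the Weil pairing the quotient is the mod-$\ell$ cyclotomic character. Comparing Frobenius traces at primes $q\neq p,\ell$ of good reduction yields
\[
a_q(f_E) \equiv 1 + q \pmod{\ell},
\]
so the maximal ideal $\mathfrak{m}_{f_E}$ of the Hecke algebra $\mathbb{T}$ acting on $S_2(\Gamma_0(p))$ contains Mazur's Eisenstein ideal $\mathfrak{I}$ modulo $\ell$.

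By Mazur's theorem on the Eisenstein ideal (from \emph{Modular curves and the Eisenstein ideal}), the Eisenstein quotient $\tilde J$ of $J_0(p)$ cut out by $\mathfrak{I}$ has finite Mordell-Weil group over $\Q$. Since $\mathfrak{m}_{f_E}\supset\mathfrak{I}$, the optimal quotient map $J_0(p)\twoheadrightarrow E$ factors through $\tilde J$, so $E(\Q)$ is finite, contradicting positive rank.

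The main step is the invocation of Mazur's theorem; translating the torsion hypothesis into a Hecke-algebraic congruence is routine. A minor point is the behavior at $p$: for prime conductor $a_p(f_E)=\pm 1$, and one must check the sign matches the $U_p$-eigenvalue on the Eisenstein series in Mazur's normalization. This is either automatic or can be handled via Mazur's finer results on the Eisenstein part of $J_0(p)$, and does not affect the finiteness conclusion. A more hands-on alternative would be to combine Mazur's classification of rational isogenies of prime degree with the Setzer-Neumann classification of prime-conductor curves with a rational $2$-isogeny, then verify directly that each surviving case has rank $0$; but the Eisenstein-ideal route is conceptually cleaner.
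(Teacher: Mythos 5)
Your strategy (Eisenstein congruence $\Rightarrow$ factor through Mazur's Eisenstein quotient $\Rightarrow$ finiteness) is a genuinely different and more conceptual route than the paper's, which simply cites Mestre's classification of prime-conductor curves with nontrivial torsion (the four sporadic isogeny classes $11a$, $17a$, $19a$, $37b$ plus the Neumann--Setzer family) and disposes of each by $2$-descent. Most of your argument is sound: reducibility of $\rho_{E,\ell}$ does give $a_q\equiv 1+q\pmod{\ell}$ at good primes, and once one knows $\mathfrak{m}\supseteq\mathfrak{I}$ a Krull-intersection argument in $\mathbb{T}_{\mathfrak{m}}$ shows $\bigcap_n\mathfrak{I}^n\subseteq I_{f_E}$, so the optimal quotient map factors through $\tilde J$ and Mazur's finiteness theorem applies. (You should also say explicitly that you may replace $E$ by the strong Weil curve in its isogeny class, since rank is an isogeny invariant but "optimal quotient" is not.)

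The genuine gap is the point you flag and then wave away. Mazur's Eisenstein ideal is generated not only by the $T_q-1-q$ for $q\neq p$ but also by $1+w_p$, i.e.\ by $U_p-1$ on forms of level $p$; so $\mathfrak{m}\supseteq\mathfrak{I}$ requires $a_p\equiv 1\pmod{\ell}$. For $\ell=2$ this is automatic since $a_p=\pm1$. For odd $\ell$ it is not: if $E$ had non-split multiplicative reduction ($a_p=-1$), the local structure of $E[\ell]$ at $p$ (the twisted Tate filtration $\mu_\ell\otimes\chi\subset E[\ell]$) is perfectly consistent with a rational $\ell$-torsion point provided $p\equiv-1\pmod{\ell}$, and then $U_p-1\equiv-2$ is a unit mod $\mathfrak{m}$, so $\mathfrak{m}\not\supseteq\mathfrak{I}$ and the factorization through $\tilde J$ fails outright. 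Thus your claim that the sign issue ``does not affect the finiteness conclusion'' is exactly backwards: the conclusion hinges on it. The gap is fillable --- at prime level one can invoke Mazur's result that every maximal ideal with reducible residual representation is Eisenstein (proved via the cuspidal and Shimura subgroups, not by the local computation above), or one can rule out $a_p=-1$ by a separate global argument --- but as written the proof does not close this case, whereas the paper's classification-plus-descent argument, though less illuminating, is complete.
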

\begin{proof}
This is a well known result; for example in  \cite{Mestre} it is shown that the isogeny classes of rational elliptic curves with conductor
 $p$ and non-trivial rational torsion subgroup are either $11.a$, $17.a$, $19.a$ 
and $37.b$, or the so called Neumann-Setzer curves that have a $2$-rational 
point. All these curves have rank $0$; this follows from a classical 
$2$-descent.
\end{proof}

\begin{proposition}\label{prop:0sum}
Let $v_E=\sum_{i=1}^n v_E(e_i) e_i\in \X$ be the vector corresponding to $f_E$. 
We have that $\sum_{i=1}^n v_E(e_i)=0$.
\end{proposition}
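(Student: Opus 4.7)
The plan is to rewrite the claim as $\deg(v_E) = 0$, where $\deg\colon \X \to \Z$ is the degree map $\sum c_i e_i \mapsto \sum c_i$, and then to exploit the fact that $v_E$ is a simultaneous Hecke eigenvector by comparing eigenvalues on the two sides of this map.

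Concretely, I would first show that for any prime $\ell \ne p$, the degree map intertwines $T_\ell$ with multiplication by $\sigma_1(\ell) = \ell+1$; equivalently, the columns of the Brandt matrix $B(\ell)$ representing $T_\ell$ in the basis $e_1,\dots,e_n$ all sum to $\ell+1$. This is standard for Brandt matrices and can be extracted either (a) from the fact that a supersingular elliptic curve over $\overline{\F}$ has exactly $\ell+1$ subgroups of order $\ell$, each giving a degree-$\ell$ isogeny (once one tracks the normalization of $B(\ell)$ by the $w_i$'s), or (b) by first observing that the ``Eisenstein vector'' $e_{\mathrm{Eis}} := \sum_i w_i^{-1} e_i$ is a $T_\ell$-eigenvector with eigenvalue $\ell+1$ (since under Gross's correspondence it corresponds to the weight-$2$ Eisenstein series of level $p$) and then combining this with the self-adjointness identity $w_i B(\ell)_{ij} = w_j B(\ell)_{ji}$ that follows from the definition of $\langle -, - \rangle$.

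Granting this step, the proposition falls out immediately. From $T_\ell v_E = a(\ell) v_E$ one has
\[
a(\ell)\deg(v_E) \;=\; \deg(T_\ell v_E) \;=\; (\ell+1)\deg(v_E),
\]
so $(a(\ell)-(\ell+1))\deg(v_E) = 0$ for every prime $\ell \ne p$. The Hasse bound yields $|a(\ell)| \le 2\sqrt{\ell} < \ell+1$, so $a(\ell) \ne \ell+1$, and we conclude $\deg(v_E) = \sum_i v_E(e_i) = 0$.

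The main (and really only) obstacle I foresee is bookkeeping: different sources use slightly different normalizations for $v_E$, for the pairing $\langle -, - \rangle$, and for the Brandt matrix, so one has to check that the conventions fixed in the introduction match those under which the column-sum statement literally reads $\sigma_1(\ell)$ rather than $\sigma_1(\ell)$ divided by some weight factor. A cleaner but less elementary alternative is to cite from \cite{Gross} the Hecke-equivariant decomposition $\X \otimes \C = \C \cdot e_{\mathrm{Eis}} \oplus (\text{cuspidal part})$ and observe that, since $v_E$ corresponds to the cusp form $f_E$, it must lie in the cuspidal part, which is exactly $\ker(\deg)\otimes \C$.
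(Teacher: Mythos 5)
Your proposal is correct, and it reaches the conclusion by a route that is close in spirit to, but genuinely distinct from, the paper's. The paper's proof is the one you sketch as your ``cleaner but less elementary alternative'': it pairs $v_E$ against the Eisenstein vector $e_0=\sum_i e_i/w_i$, notes that $\langle v_E,e_0\rangle=\sum_i w_i v_E(e_i)\cdot w_i^{-1}=\sum_i v_E(e_i)$, and concludes by Hecke-equivariance of the pairing together with the orthogonality of the cuspidal and Eisenstein parts. Your main argument is an unwinding of the same structure --- your degree map is exactly $\deg(v)=\langle v,e_0\rangle$, and the row/column-sum property $\sum_j B_{ij}(\ell)=\ell+1$ of the Brandt matrices is, via the self-adjointness $w_iB_{ij}(\ell)=w_jB_{ji}(\ell)$, equivalent to $t_\ell e_0=(\ell+1)e_0$ --- but your finishing move is different: instead of invoking orthogonality of cusp forms and Eisenstein series, you compare eigenvalues to get $(a(\ell)-(\ell+1))\deg(v_E)=0$ and kill the first factor with the Hasse bound $|a(\ell)|\le 2\sqrt{\ell}<\ell+1$. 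What this buys is a more elementary, self-contained proof that needs only the combinatorial fact that a supersingular curve has $\ell+1$ subgroups of order $\ell$ and the Hasse bound for the elliptic curve $E$, rather than the full weight of Gross's correspondence between $\X\otimes\C$ and modular forms; what the paper's version buys is brevity and the fact that it treats all Hecke operators at once without any eigenvalue estimate. The only caveat in your write-up is the row-versus-column bookkeeping for $B(\ell)$, which you correctly flag yourself and which is harmless.
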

\begin{proof}
The vector $e_0=\sum_{i=1}^n \frac{e_i}{w_i}$ corresponds to the Eisenstein 
series. Moreover, the pairing $\langle -,- \rangle:\X\times\X \rightarrow \Z$ is 
compatible with the Hecke operators. Since the space of cuspforms is orthogonal 
to the Eisenstein series, we obtain  
\[ \left<v_E, e_0 \right>=\sum_{i=1}^n v_E(e_i)=0.  \]
\end{proof}

\begin{proposition}\label{prop:wi}
If $p\equiv 1, 5 \pmod{12}$, then all the $w_i$ are odd. On the other hand  if $p \equiv 7,11 \pmod{12}$
there is exactly one even $w_k$ (and in fact $w_k=2$).
\end{proposition}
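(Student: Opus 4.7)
The plan is to reduce the claim to the standard classification of automorphism groups of elliptic curves over an algebraically closed field together with the two classical supersingularity criteria at $j=0$ and $j=1728$. Since the $e_i$ are in bijection with isomorphism classes of supersingular elliptic curves over $\overline{\F}$ (where $p>3$, because we have already reduced to the case $E$ of prime conductor $p$ large), and since over an algebraically closed field of characteristic $\neq 2,3$ one has
\[
\#\mathrm{Aut}(E_i)=\begin{cases}6, & j(E_i)=0,\\ 4, & j(E_i)=1728,\\ 2, & \text{otherwise,}\end{cases}
\]
we get $w_i\in\{1,2,3\}$. Hence $w_i$ is even if and only if $j(E_i)=1728$, and in that case $w_i=2$. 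Moreover, since the $j$-invariant uniquely determines an isomorphism class over $\overline{\F}$, at most one index $i$ can give $w_i=2$.

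Thus the proposition reduces to deciding, for each residue class of $p$ modulo $12$, whether the unique supersingular $j$-invariant equal to $1728$ actually occurs in the list. I would invoke the classical criteria: the CM curve $y^{2}=x^{3}+x$ with $j=1728$ has supersingular reduction at $p$ if and only if $p$ is inert in $\Q(i)$, i.e.\ $p\equiv 3\pmod 4$; analogously $y^{2}=x^{3}+1$ with $j=0$ is supersingular at $p$ if and only if $p$ is inert in $\Q(\sqrt{-3})$, i.e.\ $p\equiv 2\pmod 3$. (These are standard; one can also cite that an elliptic curve with CM by an order in an imaginary quadratic field $K$ has supersingular reduction precisely at primes inert in $K$.)

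Now I would tabulate the four residue classes of $p$ modulo $12$:
\[
\begin{array}{c|cc|c}
 p\bmod 12 & j=1728\text{ ss?} & j=0\text{ ss?} & \text{even }w_i?\\\hline
 1 & \text{no} & \text{no} & \text{none}\\
 5 & \text{no} & \text{yes} & \text{none}\\
 7 & \text{yes} & \text{no} & \text{one, }w_k=2\\
11 & \text{yes} & \text{yes} & \text{one, }w_k=2
\end{array}
\]
In the first two lines every $w_i$ equals $1$ or $3$, hence all $w_i$ are odd; in the last two lines exactly one index $k$ satisfies $j(E_k)=1728$, giving the unique even $w_k=2$. This is precisely the stated dichotomy.

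There is no real obstacle here: the only subtlety is making sure the case $p=2,3$ is excluded (which is automatic, since $E$ has prime conductor and the statement concerns $p\equiv 1,5,7,11\pmod{12}$, so $p>3$), and that the existence of a $j=0$ supersingular curve in the lines $p\equiv 5,11\pmod{12}$ does not create additional even $w_i$'s — but $w=3$ is odd, so it does not.
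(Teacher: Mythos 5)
Your argument is correct. The paper does not actually prove this proposition; it simply cites the table of supersingular invariants in Gross (Table 1.3, p.~117), whereas you supply the standard derivation of the relevant entries of that table. Your two ingredients are exactly the right ones: for $p>3$ (automatic here, since $p$ is a prime coprime to $12$) one has $\#\mathrm{Aut}(E_i)\in\{2,4,6\}$ with the larger groups occurring only at $j=1728$ and $j=0$ respectively, so $w_i\in\{1,2,3\}$ and the only possible even value is $w_i=2$ at the unique class with $j=1728$; and Deuring's criterion decides in which residue classes mod $4$ and mod $3$ the $j=1728$ and $j=0$ curves are supersingular. One pedantic remark: the criterion for a CM curve to have supersingular good reduction at $p$ is that $p$ be inert \emph{or ramified} in the CM field, not just inert; since $p>3$ is unramified in both $\Q(i)$ and $\Q(\sqrt{-3})$, this does not affect your table, but the statement as you phrased it is slightly too narrow in general. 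With that caveat, your case check over the four classes mod $12$ is complete and matches the cited table.
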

\begin{proof}
See \cite[Table ~1.3  p.~117]{Gross}.
\end{proof}

Given $-D$ a fundamental negative discriminant, Gross defines
\[ b_D= \sum_{i=1}^{n} \frac{h_i(-D)}{u(-D)} e_i  ,\]
where $h_i(-D)$ is the number of optimal embeddings of the order of discriminant $-D$ into $End(E_i)$ modulo conjugation by $End(E_i)^{\times}$ and
$u(-D)$ is the number of units of the order.
We are in position to state (a special case of) Gross-Waldspurger formula 
\cite[Proposition ~13.5]{Gross}.
 
 \begin{proposition} \label{prop:Gross}
 If $-D$ is a fundamental negative discriminant with $ \left(\frac{-D}{p}\right)=-1$, then
 
 \[ L(E,1)L(E \otimes \varepsilon_D,1)= \frac{\left(f_E,f_E \right)}{\sqrt{D}} \frac{{m_D}^2}{ \langle v_E, v_E \rangle } ,\] 
 
 where $\varepsilon_D$ is the quadratic character associated to $-D$, $\left(f_E,f_E \right)$ is the Petersson inner product on $\Gamma_{0}(p)$ and

\[ m_D= \langle v_E, b_D \rangle .\]

\end{proposition}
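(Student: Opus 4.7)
The statement is Gross's explicit version of Waldspurger's central value formula, and my plan is simply to follow the strategy of \cite{Gross}. The whole argument takes place on the definite quaternion algebra $B = B_{p,\infty}$ ramified at $p$ and $\infty$. By Deuring's theorem, the supersingular elliptic curves $E_i/\overline{\F}$ are in bijection with the right ideal classes of a fixed maximal order $R \subset B$, and $End(E_i)$ is the left order of the corresponding ideal. Under this identification, $\X \otimes \C$ is the space of complex-valued functions on the class set, the pairing $\langle -, - \rangle$ is the mass-weighted inner product (with weights $w_i$), and the Hecke action is implemented by the Brandt matrices. The hypothesis $\left(\frac{-D}{p}\right) = -1$ ensures that the imaginary quadratic order of discriminant $-D$ embeds into some $End(E_i)$, so $b_D$ is a genuine element recording these optimal embeddings with the appropriate weighting.

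The next step is to invoke the Jacquet--Langlands correspondence, which identifies $v_E$ with the quaternionic eigenform associated to $f_E$. One then applies Waldspurger's formula, expressing the central value of the Rankin--Selberg convolution $L(f_E \otimes \theta_D, s)$ at $s=1$ as the square of a toric period of this quaternionic form against a character of $K^{\times}$, where $K = \Q(\sqrt{-D})$. Because $b_D$ corresponds to the trivial class group character, this toric period collapses precisely to $\langle v_E, b_D \rangle = m_D$. On the analytic side, the standard factorization $L(f_E \otimes \theta_D, s) = L(E,s) \, L(E \otimes \varepsilon_D, s)$ accounts for the left-hand side of the formula.

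The genuine work is in matching normalizations: translating between the adelic inner product used in Waldspurger's formalism and the classical Petersson product $(f_E, f_E)$ on $\Gamma_0(p)$, extracting the factor of $\sqrt{D}$ from the archimedean local zeta integral, and verifying that the local factors at $p$ and at the ramified primes of $\varepsilon_D$ are trivial under the splitting hypothesis on $-D$. This bookkeeping of local constants is the main obstacle, and it is precisely the computation Gross carries out in \cite[\S 11--13]{Gross}; for our purposes the identity may simply be quoted.
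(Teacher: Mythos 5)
Your proposal is correct and ends up in the same place as the paper, which states this result as a special case of the Gross--Waldspurger formula and simply cites \cite[Proposition~13.5]{Gross} without reproducing the argument. The sketch you give of Gross's proof (ideal classes of the definite quaternion algebra, Brandt matrices, Jacquet--Langlands, Waldspurger's toric period, and the normalization bookkeeping) is an accurate summary of what the cited reference does, so quoting the identity as you propose is exactly the paper's approach.
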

 We will use the formula in the case that $-D=-4$ (and thus $p \equiv 3 \bmod{4}$). We know that $u(-4)=4$ and $h_{i}=0$ unless, following the notation
 from Proposition \ref{prop:wi}, $i=k$. In that case it is easy to see that $h_k(-4)=2$. Combining these observations we obtain that $b_4=\frac{1}{2}e_k$ where $k$ is the only index such that $w_k=2$ (this  corresponds to the elliptic curve $E_k$ with complex multiplication by $\Z[i]$).
  
Now we have the necessary ingredients in order to prove Theorem \ref{thm:1}.

\begin{proof}[Proof of Theorem \ref{thm:1}]
As remarked in the introduction, it is enough to prove the theorem when $E$ has prime conductor $p$ and it is
the strong Weil curve.
Suppose on the contrary that $E$ has positive rank. In consequence, by Proposition \ref{prop:modular}
and Proposition \ref{prop:torsion}  we know that $\langle v_E, v_E \rangle$ must be odd.
Moreover,  
\[ \left<v_E, v_E\right>=\sum_{i=1}^n w_i v_E(e_i)^2\equiv \sum_{i=1}^n w_i v_E(e_i)  \pmod{2}. \]
 Using Propositions  \ref{prop:0sum} and \ref{prop:wi} we obtain that if 
$p\equiv 1, 5 \pmod{12}$  $\left<v_E, v_E\right>$ is even  and if $p \equiv 7,11 
\pmod{12}$ then $\left<v_E, v_E\right> \equiv v_E(e_k) \pmod{2}$, where $k\in 
\N$ is the only index such that $w_k=2$. In that case, since $L(E,1)=0$ (by 
Proposition \ref{prop:GZ}),
Proposition \ref{prop:Gross} implies that 

\[ m_4=\left<v_E, b_4 \right>=0  .\]
 Since $b_4=\frac{1}{2}e_k$, we get that
\[ m_4=v_E(e_k)=0.\]
 Therefore, $\left<v_E, v_E\right>$ is even, leading to a contradiction.
\end{proof}
\begin{remark}
 Another proof along the same lines uses that if $L(E,1)=0$ then 
 \[\sum_i {w_i}^ 2 v_E(e_i)^3=0 .\] 
 This is proved in \cite[Corollary ~11.5]{GK}, as a 
consequence of the Gross-Kudla  formula of special values of triple product 
$L$-functions.
  The number $\sum_i {w_i}^ 2 v_E(e_i)^3$  clearly has the same parity as $\langle v_E, v_E \rangle$, leading to the desired contradiction.
 
\end{remark}

\section{The proof of the Theorem \ref{thm:2}}
\begin{proof}[Proof of Theorem \ref{thm:2}] For a given $e_i$, denote by 
$\bar{i}\in \{1, 2, \ldots, n\}$ the unique index such that $e_{\bar{i}}$ 
corresponds to the curve $E_i^p$. Then 
\cite[Proposition ~2.4]{Gross} implies that $v(e_i)=v(e_{\bar{i}})$. As in the 
proof of Theorem \ref{thm:1}, we have that $v_E(e_k)=0$ whenever $w_k \ne 1$, 
hence Proposition \ref{prop:torsion} implies that 
\[m_E \equiv \sum_i v_E(e_i)^2 \pmod{4}.  \]

If $E_i$ is defined over $\F$ (i.e. $\bar{i}=i$), then by the assumption \[ v_E(e_i)^2 \equiv 0 \pmod{4}.\]
 Hence \[m_E\equiv \sum_{i}^{'} 2 v_E(e_i)^2 \pmod{4},\] where we sum over the pairs $\{i, \bar{i}\}$ with $i \ne \bar{i}$. Note that Gross-Kudla formula implies that \[\sum_i v_E(e_i)^3 \equiv \sum_{i}^{'} 2 v_E(e_i)\equiv 0 \pmod{4},\] where the second sum is over the pairs $\{i, \bar{i}\}$ for which $v_E(e_i)$ is odd. It follows that the number of such pairs is even, hence $m_E\equiv 0 \pmod{4}$.

\end{proof}

\bibliographystyle{alpha}

\end{document}